\theoremstyle{plain}
\newtheorem{thm}{\it Theorem}[section]
\theoremstyle{remark}
\newtheorem{defn}[thm]{Def{}inition}
\newtheorem{rem}[thm]{Remark}
\newtheorem{exa}[thm]{Example}
\numberwithin{equation}{section}
\begin{document}

\title [Frames Associated with Weyl-Heisenberg Group and Extended Affine Group]{Frames With Several Generators  Associated with Weyl-Heisenberg Group and Extended Affine Group}

\author[Divya Jindal]{Divya Jindal}
\address{{\bf{Divya Jindal}}, Department of Mathematics,
University of Delhi, Delhi-110007, India.}
\email{divyajindal193@gmail.com}

\author[Lalit   Kumar Vashisht]{Lalit  Kumar  Vashisht}
\address{{\bf{Lalit  Kumar  Vashisht}}, Department of Mathematics,
University of Delhi, Delhi-110007, India.}
\email{lalitkvashisht@gmail.com}

\begin{abstract}
We study the construction of  Gabor frames and wavelet frames for Weyl-Heisenberg group and extended affine group by using  contraction between the affine group and the Weyl-Heisenberg group  due to  Subag, Baruch, Birman and Mann.  Firstly, we give construction of  Gabor frames with several generators from a unitary irreducible representation associated to the  Weyl-Heisenberg group.  Wavelet frames  with several generators associated with an  extended affine group have been obtained. A relation between  frames for Weyl-Heisenberg group and extended affine group is also discussed. Finally, we show that  frames of Gabor and wavelet structure are stable under small perturbations.

\end{abstract}

\renewcommand{\thefootnote}{}
\footnote{2020 \emph{Mathematics Subject Classification}: Primary 42C15; Secondary 42C30; 42C40.}

\footnote{\emph{Key words and phrases}: Frames, Gabor frames; wavelet frames; Weyl-Heisenberg group;  Extended affine group; perturbation.\\
The research of Divya Jindal is supported by the Council of Scientific $\&$ Industrial Research (CSIR), India.  Grant No.: 09/045(1680)/2019-EMR-I. Lalit Kumar Vashisht is  supported by the Faculty Research Programme Grant-IoE, University of Delhi \ (Grant No.: Ref. No./IoE/2021/12/FRP).}

\maketitle

\baselineskip15pt
\section{Introduction}
Wavelet and Gabor analyses are perfect illustrations of the deep analogy between quantum physics and signal processing, see \cite{AG, FS}.  Gabor analysis yields a time frequency representation of signals while wavelet provide a time scale representation. More precisely, both Gabor analysis and wavelet analysis are integral part  of time-frequency analysis which gives uniform time and frequency resolution. In recent years, reconstruction of signals and its decomposition by using wavelet and Gabor systems have been extensively studied, see \cite{CK,  Classen, DabI,   FZUS,  KG, Heil20, H89, Hari, Zalik1, Zalik2} and many references therein.  The wavelet and Gabor analyses are both induced by coherent states  that are associated with a unitary irreducible representation of a Lie group, see \cite{AG, KG, H89, AP} for details. Wavelet analysis corresponds to unitary irreducible representations of the affine group while the Gabor analysis corresponds to that of the Weyl-Heisenberg group. Let $\mathcal{H}$ be a nonzero separable Hilbert space and $G$ be a locally compact group. A unitary representation of $G$ is a homomorphism $\pi$ from $G$ into the group $\mathcal{U} (\mathcal{H})$ of unitary operators on $\mathcal{H}$ that is continuous with respect to the strong operator topology, that is, a map $\pi: G\to \mathcal{U}(\mathcal{H})$ that satisfies $\pi(xy)=\pi(x)\pi(y)$ and $\pi(x^{-1})=\pi(x)^{-1}=\pi(x)^*$, and for which $x\to \pi(x)u$ is continuous from $G$ to $\mathcal{H}$ for any $u\in \mathcal{H}$.  Also, $\pi$ is said to be irreducible if $\pi$ admits only trivial (that is, $=\{0\}$ or $\mathcal{H}$) invariant subspaces. A Lie group can be obtained as a certain limit of another group with the help of group contraction, see \cite{SB} and references therein.  Let $G_0$ and $\widetilde{G_0}$ be two Lie groups. We say that $\widetilde{G_0}$ is a \emph{contraction} of $G_0$ if for every $\epsilon \in (0,1]$, there exists a diffeomorphism $F_{\epsilon}: \widetilde{G_0} \to G_0$ such that $F_{\epsilon}(e_0)=e_1$ where $e_0$, $e_1$ are the identity elements of $\widetilde{G_0}$ and  $G_0$, respectively, and $x.y=\lim_{\epsilon \ to 0^{+}}F_{\epsilon}^{-1}(F_{\epsilon}(x).F_{\epsilon}(y))$ for every $x,y \in \widetilde{G_0}$. Very recently, Subag,  Baruch, Birman, and  Mann, in \cite{SB}, studied relation between Gabor and wavelet analyses by using ``contraction'' between  affine group and Weyl-Heisenberg group. They obtained unitary irreducible representations of the Heisenberg group as contraction of representations of the extended affine group. Using this technique, Subag et al.,  in \cite{SB}, relates two analyses, where they contracted coherent states, tight frames and resolutions of the identity. In the direction of frame conditions,  Subag et al. constructed a family of time localized wavelet frames that contract to a Gabor frame. In \cite{SB}, Subag et al., also showed that how a family of frequency localized wavelet frames can be contracted to non-standard Gabor frames. They also gave deformation of Gabor frames to wavelet frames. To connect wavelet and Gabor systems by special type of groups, the authors of  \cite{DFR, BT1, BT2} gave some relations between the Gabor analysis and wavelet analysis by exhibiting both the affine group and the  Weyl-Heisenberg group  as subgroups of the four dimensional affine Weyl-Heisenberg group.

Motivated by above works which relates fundamental properties of frames from wavelet systems to Gabor systems via contraction, we study construction of  frames with several generators for Weyl-Heisenberg group and extended affine group.  Subag et al., in  \cite{SB},  gave tight frames with Gabor and wavelet structure associated with  Weyl-Heisenberg group and extended affine group. They obtain unitary irreducible representation of Weyl-Heisenberg group from unitary irreducible representation of extended affine group using group contraction. Using the representations given by Subag et al., in  \cite{SB},  we give existence of Gabor frames and wavelet frames with several generators associated with Weyl-Heisenberg group and extended affine group, respectively.
\subsection{Outline and contributions}
 The paper is structured as follows: To make the paper self-contained, Section \ref{sec2} gives a brief review on  Gabor systems, wavelet systems, Weyl-Heisenberg group and extended affine group. The main results are given in  Section \ref{sec3}. Our first result gives sufficient conditions for existence of  Gabor frames with several generators for the signal space $L^2(\mathbb{R}, dx)$ from a unitary irreducible representation associated to the  Weyl-Heisenberg group, see Theorem \ref{t1}. The conditions given in Theorem \ref{t1} are only sufficient are justified in Example \ref{eg1}. Theorem \ref{t3} gives wavelet frames with several generators for associated with an  extended affine group. A relation between frames for Weyl-Heisenberg group and extended affine group can be found in Theorem \ref{t4}. Theorem \ref{pthm1} of Section \ref{sec4} shows that  frames of both Gabor and wavelet structure are stable under small perturbations.
\section{Preliminaries}\label{sec2}
Symbols $\mathbb{N}$, $\mathbb{ Z}$,  $\mathbb{R}^{+}$,  $\mathbb{R}^{*}$ and $\mathbb{R}$ stand for a set of the natural numbers, integers, positive real numbers, non-zero real numbers and real numbers,  respectively. To make notations compatible with quantum physics, in this paper, a vector $\phi$ of a Hilbert space $\mathbb{H}$ is also denoted by $|\phi \rangle$ (ket vector or state in  $\mathbb{H}$). The inner product on $\mathbb{H}$ is denoted by $\langle \cdot | \cdot \rangle$. The inner product $\langle \cdot | \cdot \rangle$ is conjugate-linear in the first variable and linear in the first variable, that is,
\begin{align*}
\langle \phi| \alpha \psi + \beta \varphi \rangle = \alpha \langle \phi | \psi \rangle + \beta \langle \phi | \varphi \rangle  \ \   \text{and} \ \  \langle \alpha \phi| \psi \rangle = \overline{\alpha} \langle  \phi| \psi \rangle,
\end{align*}
for all vectors $|\phi \rangle$, $|\psi \rangle $ and $|\varphi \rangle$ in $\mathbb{H}$ and for all scalars $\alpha$, $\beta$.  Here, $\overline{\alpha}$ denote the complex-conjugate of the scalar $\alpha$. The norm on $\mathbb{H}$, denoted by $\|\cdot\|$, is defined as $\|\phi\| = \langle \phi | \phi\rangle$ for  $|\phi \rangle \in \mathbb{H}$. As is standard, the space of equivalence classes of square integrable (Lebesgue integrable) functions is denoted by $L^2(\mathbb{R}, dx)$. Here $dx$ is the Lebesgue measure. Engineers and physicists pronounced  $L^2(\mathbb{R}, dx)$ as the  ``\emph{signal space}''. We consider  the  following standard inner product on $L^2(\mathbb{R}, dx)$:
\begin{align}\label{inn1}
\langle \phi | \psi \rangle = \int_{\mathbb{R}} \overline{\phi(x)} \psi(x) dx, \  \ \phi, \,  \psi \in L^2(\mathbb{R}, dx).
\end{align}
The space $L^2(\mathbb{R}, dx)$  becomes  a Hilbert space with  respect to the  inner product defined in \eqref{inn1}.

To define Gabor and wavelet system, the following classes of operators, which acts unitarily on $L^2(\mathbb{R}, dx)$, are used:  Let  $a$, $b$  be positive real numbers,  $c$ be a non-zero real number and let $\phi$ be a non-zero vector  in $L^2(\mathbb{R})$.
\begin{align*}
&   \ T_a: \phi (x) \mapsto \phi (x-a) \ \quad (\text{Translation by} \ a );\\
& E_b: \phi (x) \mapsto e^{2\pi ib\cdot x} \phi(x) \quad (\text{Modulation by} \ b); \\
&  D_{c}: \phi (x) \mapsto  |c|^{\frac{1}{2}} \phi(cx) \ \ \,  \quad (\text{Dilation by} \  c ).
\end{align*}
A collection of vectors of the form $\mathcal{G}(a, b, \phi): = \{E_{mb}T_{na}\phi\}_{m, n \in \mathbb{Z}} = \{e^{2 \pi i m b x} \phi(x -a)\}_{m, n \in \mathbb{Z}}$ is called a \emph{Gabor system}; and the collection $\mathcal{W}(a, b, \phi): \{T_{k b a^j}D_{a^j}\phi\}_{j, k \in \mathbb{Z}} = \{a^{\frac{j}{2}} \phi(a^j x- k b)\}_{j, k \in \mathbb{Z}}$ is called a \emph{wavelet system}.

\subsection{Hilbert Frames} Dennis Gabor, in \cite{Gabor}, introduced a fundamental approach to signal decomposition in terms of elementary signal, for example, the Gaussian. Duffin and Schaeffer \cite{DS} while studying some intense problems related to non-harmonic Fourier series,  abstracted the fundamental notion of Gabor to introduce the notion of a frame  for the signal space $L^2((-\beta, \beta), dx)$. To be precise, they defined frames of exponentials for the space $L^2((-\beta, \beta), dx)$. Gabor frames or Weyl-Heisenberg frames originate in quantum mechanics by Neumann \cite{Neumann},  and in communication theory by the idea of   Gabor, see  \cite{Gabor}. A countable collection of vectors   $\{|w_k \rangle\}_{k \in \mathfrak{I}}$ in a separable Hilbert space $\mathbb{H}$  is a \emph{Hilbert frame } (or simply, \emph{frame}) for  $\mathbb{H}$ if  there exist positive real numbers $ \gamma_o$, $ \delta_o $ such that
\begin{align}\label{1.1}
\gamma_o \|w\|^2\leq  \sum_{k \in \mathfrak{I}} |\langle w, w_k\rangle|^2 \leq \delta_o \|w\|^2
\end{align}
holds for all $|w\rangle$ in   $\mathbb{H}$.
The scalars $ \gamma_o$, $ \delta_o $ are called lower and upper frame bounds of the frame $\{w_k\}_{k \in \mathfrak{I} }$. The frame  $\{|w_k\rangle\}_{k \in \mathfrak{I} }$  \emph{frame} is tight if it is possible to choose $\alpha_o =\delta_o$, and  \emph{normalized tight} (or\emph{Parseval},  if  $\alpha_o =\delta_o =1$. If only upper inequality holds in \eqref{1.1}, then we say that  $\{|w_k\rangle\}_{k \in \mathfrak{I}}$   is  a \emph{Bessel sequence} with \emph{Bessel bound} $\delta_o$. If $\{|w_k \rangle\}_{k \in \mathfrak{I}}$ is a frame for $\mathbb{H}$, then the map  $\mathrm{S}:  \mathcal{H} \rightarrow  \mathcal{H }$, known as the  \emph{frame operator}, given by
$\mathrm{S}: |w\rangle \mapsto \sum_{k \in \mathfrak{I}}\langle w, w_k \rangle w_k$
is linear, bounded and invertible on $\mathbb{H}$. Thus, a frame for $\mathbb{H}$ provides a decomposition, not necessarily unique,  for   every vector  $|w\rangle$ of $\mathbb{H}$:
\begin{align*}
|w\rangle = \mathrm{S}\mathrm{S}^{-1}|w\rangle =\sum_{k \in \mathfrak{I}} \big(\langle \mathrm{S}^{-1}w, |w_k\rangle\big) \,  |w_k\rangle.
\end{align*}
This decomposition turns out to be a powerful tool for deriving many fundamental results in abstract frame theory. Further, it plays central role  both in both pure mathematics and engineering science, for  technical details about this, we refer  to  excellent texts by Casazza and Kutyniok \cite{CK} and Heil \cite{Heil20}.
Nowadays the theory of  wavelets, distributed signal processing,  quantum mechanics, iterated function systems and many more topics are emerging in  important applications in frame theory, see \cite{DVII,  JVash, JV202021, VD3} and many references therein. A frame of the form $\mathcal{G}(a, b, \phi)$ is called the Gabor frame and wavelet frame is a frame of the form $\mathcal{W}(a, b, \phi)$.
\subsection{The Weyl-Heisenberg group}

The Weyl-Heisenberg group is the semidirect product $\mathbb{R}^2 \rtimes_\psi \mathbb{R}$, where $\psi:\mathbb{R} \rightarrow Aut(\mathbb{R}^2)$ is given by $\psi_\lambda((x_1,x_2))=(x_1+\lambda x_2,x_2)$ for every $\lambda,x_1,x_2 \in \mathbb{R}$;  and denote $\mathcal{W}=\mathbb{R}^2 \rtimes_\psi \mathbb{R}$ with multiplication law
\begin{align*}
((x_1,x_2),\lambda)((y_1,y_2),\nu)=(\psi_\lambda(y_1,y_2)+(x_1,x_2),\lambda+\nu).
\end{align*}
Recall that for every $P\in \mathbb{R}^*, Q \in \mathbb{R}$, the unitary irreducible representation of this group acting on $\mathcal{H}$, $\kappa^{P,Q}:\mathcal{W} \rightarrow \mathcal{U}(L^2(\mathbb{R},dx))$ is given by
\begin{align*}
 (\kappa^{P,Q}(((u_1,u_2),z))f)(x)= e^{i[P(u_1+xu_2)+Qu_2]}f(z+x).
\end{align*}

For $l \in \{1,2,\dots,N\}$, where $N$ is some fixed natural number, let $q_0^{(l)}$ and $p_0^{(l)}$ be real numbers such that $|q_0^{(l)}p_0^{(l)}|< 2\pi$. Consider the discrete subset of $\mathcal{W}$ given by
\begin{align*}
\mathcal{W}_{q_0^{(l)},p_0^{(l)}}=\bigg\{\bigg(\bigg(\frac{mnq_0^{(l)}p_0^{(l)}}{2},mp_0^{(l)}\bigg),nq_0^{(l)}\bigg)|n,m \in \mathbb{Z}\bigg\}.
\end{align*}
For $0\neq \phi_l \in L^2(\mathbb{R},dx)$ where $l \in \{1,2,\dots,N\}$, the sequence
\begin{align*}
\big|\phi_{(m,n,l)}^{P,Q}\big>=\kappa^{P,Q}\bigg(\bigg(\frac{mnq_0^{(l)}p_0^{(l)}}{2},mp_0^{(l)}\bigg),nq_0^{(l)}\bigg)\phi_l,
\end{align*}
where $n,m \in \mathbb{Z}$ is dense in $L^2(\mathbb{R},dx)$, see \cite{SB} for technical details.
\subsection{Extended Affine Group}\label{sec2.3}
The affine group $A$ realized as the upper half plane
\begin{center}
    $A=\{(a,b): a\in \mathbb{R}^+, b \in \mathbb{R} \}$,
\end{center}
and the multiplication law in $A$ is given by
\begin{center}
$(a_1, b_1)(a_2,b_2)=(a_1a_2, a_1b_2+b_1)$,
\end{center}
for all points $(a_1, b_1), (a_2, b_2)\in A$.

The support of a function $f$, denoted by  supp$f$, is  defined as  supp$f: =$ closure of the set $\{x: f(x) \ne 0\}$. Let $V$ be a subspace  of $L^2(\mathbb{R}, dx)$ given by
\begin{align}\label{eqTT1}
    V= \Big\{f \in L^2(\mathbb{R}, dx) : \mbox{supp}(\hat{f}) \subseteq [0, \infty)\Big\}
\end{align}
where $\hat{f}$ is the Fourier transform of $f$ given by
\begin{align*}
    \hat{f}(w)=\frac{1}{\sqrt{2\pi}}\int_{\mathbb{R}}f(x)e^{-iwx}dx.
\end{align*}
Recall that for any $\alpha \in \mathbb{R},$ the unitary irreducible representation of the affine group on the subspace $V$ of $L^2(\mathbb{R}, dx)$,  as given in \eqref{eqTT1},  is the map  $\eta^\alpha : A \to \mathcal{U}(V)$ given by
\begin{center}
 $(\eta^\alpha(a,b)f)(x)=\frac{1}{\sqrt{a}}f(\frac{x+\alpha b}{a}), \, x \in \mathbb{R}$;
\end{center}
 and correspondingly define the sequence
 \begin{center}
     $\phi^{\alpha}_{a,b}(x)=(\eta^\alpha(a,b)\psi)(x)=\frac{1}{\sqrt{a}}\psi(\frac{x+\alpha b}{a})$.
 \end{center}
The extended affine group, denoted by $EA$, is given by the direct product of the affine group $A$ with additive group of real numbers. That is,  $EA=A \oplus \mathbb{R}$. The multiplication  law on $EA$ is given by
\begin{center}
    $(a,b,y)(c,d,z)=(ac, ad+b, y+z)$.
\end{center}
For $\beta \in \mathbb{R},$ let $\chi_\beta(x)=e^{i\beta x}$ be unitary character of $\mathbb{R}.$ For every $\beta \in \mathbb{R}$, $\alpha \in \mathbb{R^*}$, the unitary irreducible representation of $EA$ acting on $V$ is the map  $\eta^\alpha \otimes \chi_\beta : EA \to \mu(V)$  given by
\begin{center}
    $(\eta^\alpha \otimes \chi_\beta(a,b,y))f)(x)=e^{i\beta y}\frac{1}{\sqrt{a}}f(\frac{x+\alpha b}{a})$.
\end{center}
For finding the relation between frames for $EA$ and frames for $W$, we need a different realization which is better suited. This realization is given by three linear invertible maps $S_1$, $S_2$, $S_3(\epsilon)$ defined by
\begin{enumerate}[$(i)$]
    \item $S_1: V \to L^2(\mathbb{R}^+, dx)$, \ $S_1(f)(x)=\hat{f}(x)|_{\mathbb{R}^+}$.

    \item $S_2: L^2(\mathbb{R}^+, dx) \to L^2(\mathbb{R}^+, \frac{1}{x}dx)$, \  $S_2(f)(x)=\sqrt{x}f(x)$.

    \item $S_3(\epsilon):  L^2(\mathbb{R}^+, \frac{1}{x}dx) \to L^2(\mathbb{R}^+, dx)$, \  $S_3(\epsilon)(f)(x)=f\circ\psi_\epsilon(x)$,
      \ where $\psi_\epsilon(x)= e^{-\epsilon x}$.
\end{enumerate}

Set $U_\epsilon = S_3(\epsilon)\circ S_2 \circ S_1$. For each $\epsilon \in (0, 1]$, $\alpha \in \mathbb{R}^*$ and $\beta \in \mathbb{R}$, interwine $\eta^\alpha \otimes \chi_\beta$ with $U_\epsilon$ to get the equivalent representation
\begin{center}
    $\zeta^{\beta, \alpha}_\epsilon : EA \to \mathcal{U}(V)$
\end{center}
\begin{align*}
    \begin{split}
(\zeta^{\beta, \alpha}_\epsilon(a,b,y)f)|_x & = U_\epsilon \circ (\eta^\alpha \otimes \chi_\beta)(a,b,y)\circ U_\epsilon)(f)(x)\\
& = e^{i\beta y}e^{i\alpha be^{-\epsilon x}}f(\frac{-\log a}{\epsilon}+x).
\end{split}
\end{align*}

\section {Main Results}\label{sec3}
We begin this section with the following definition.
\begin{defn}
Let $N$ be a fixed natural number. A collection of vectors $\{\big|\phi_{(m,n,l)}\big>\}_{m, n\in\mathbb{Z} \atop l\in\{1,2,\dots,N\}}$ in $\mathcal{H}$ is said to be a \emph{frame}  for $\mathcal{H}$ if there exist positive real constants $\alpha_o$ and  $\beta_o$ such that
\begin{align}
\alpha_o \|f\|^2 \leq \sum_{l = 1}^{N}\sum_{m, n\in \mathbb{Z}}|\langle f|\phi_{(m,n,l)}\rangle\|^2 \leq \beta_o\|f\|^2 \ \text{for all} \ f \in \mathcal{H}.
\end{align}
The scalars $\alpha_o$, $\beta_o$, obviously not unique,  are called \emph{lower frame bound} and \emph{upper frame bound}, respectively,  for $\{\big|\phi_{(m,n,l)}\big>\}_{m, n\in\mathbb{Z} \atop l\in\{1,2,\dots,N\}}$.
\end{defn}
\subsection{Frames for Weyl-Heisenberg Group}{\label{3a}}
The following theorem gives sufficient conditions for a sequence of functions obtained from a unitary irreducible representation associated to the  Weyl-Heisenberg group to be a Gabor frame with several generators.

\begin{thm}\label{t1}
For each $l\in\{1,2,\dots,N\}$, let $\phi_l$ be a non-zero function in $L^2(\mathbb{R},dx)$ such that support of each $\phi_l$ is contained in an interval of length $\mu_l$.
Assume that
\begin{enumerate}[$(i)$]
  \item $p_0^{(l)}=\frac{2\pi}{P\mu_l}$,
  \item $|q_0^{(l)}p_0^{(l)}|<2\pi$, that is, $|q_0^{(l)}|<|P|\mu_l$ for some fixed $q_0^{(l)}$,
\item For each $l\in\{1,2,\dots,N\}$,
\begin{align}\label{e1}
\frac{\alpha_o}{N \min\limits_{1 \leq i \leq N} \mu_i} \leq \sum_{n\in\mathbb{Z}}\big|\phi_l\big(x-nq_0^{(l)}\big)\big|^2 \leq \frac{\beta_o}{N \max\limits_{1 \leq i \leq N} \mu_i}, \ \  \text{for almost all} \ x\in \mathbb{R}.
\end{align}
\end{enumerate}
 Then, the collection  $\{\big|\phi_{(m,n,l)}^{P,Q}\big>\}_{m,n\in \mathbb{Z} \atop l\in\{1,2,\dots,N\}}$ is a multivariate frame for $L^2(\mathbb{R},dx)$ with frame  bounds  $\alpha_o$, $\beta_o$.
\end{thm}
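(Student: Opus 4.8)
The plan is to reduce the multivariate frame inequality to the classical ``painless'' Gabor estimate carried out separately for each generator index $l$, and then to assemble the $N$ contributions. First I would expand the defining expression through the representation $\kappa^{P,Q}$: writing $u_1 = \frac{mnq_0^{(l)}p_0^{(l)}}{2}$, $u_2 = mp_0^{(l)}$ and $z = nq_0^{(l)}$, a direct substitution gives
\[
\phi_{(m,n,l)}^{P,Q}(x) = e^{i\theta_{m,n,l}}\, e^{iPmp_0^{(l)}x}\, \phi_l\bigl(x + nq_0^{(l)}\bigr),
\]
where $\theta_{m,n,l} = \tfrac{1}{2}Pmnq_0^{(l)}p_0^{(l)} + Qmp_0^{(l)}$ collects the $x$-independent phase. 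Since $|e^{i\theta_{m,n,l}}| = 1$, this phase is irrelevant to $\bigl|\langle f | \phi_{(m,n,l)}^{P,Q}\rangle\bigr|^2$, and by hypothesis $(i)$ the modulation frequency becomes $Pp_0^{(l)} = \tfrac{2\pi}{\mu_l}$, so the oscillatory factor is exactly $e^{2\pi i m x/\mu_l}$.

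The crucial step is the summation over $m$ for fixed $n$ and $l$. Put $H_n(x) = \overline{f(x)}\,\phi_l(x + nq_0^{(l)})$; since $\operatorname{supp}\phi_l$ lies in an interval of length $\mu_l$, the function $H_n$ is supported in some interval $I_{n,l}$ of the same length $\mu_l$. (The upper bound in \eqref{e1} in fact forces $\phi_l \in L^\infty$, so that $H_n \in L^2(I_{n,l})$.) The family $\{\mu_l^{-1/2}\, e^{2\pi i m x/\mu_l}\}_{m\in\mathbb{Z}}$ is an orthonormal basis of $L^2(I_{n,l})$ for every such window, because the exponentials are $\mu_l$-periodic and therefore orthonormal over any interval of length $\mu_l$. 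Parseval's identity then yields
\[
\sum_{m\in\mathbb{Z}} \bigl|\langle f | \phi_{(m,n,l)}^{P,Q}\rangle\bigr|^2 = \mu_l \int_{\mathbb{R}} |f(x)|^2\, \bigl|\phi_l(x+nq_0^{(l)})\bigr|^2\, dx.
\]

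Next I would sum over $n$ and interchange sum and integral (legitimate by Tonelli, all terms being nonnegative), obtaining the single-generator total $\mu_l \int_{\mathbb{R}} |f(x)|^2 \sum_{n} |\phi_l(x+nq_0^{(l)})|^2\,dx$; replacing $n$ by $-n$ brings the inner sum into the form appearing in \eqref{e1}. Applying the two-sided estimate there gives
\[
\frac{\alpha_o\, \mu_l}{N \min_{i} \mu_i}\,\|f\|^2 \le \sum_{m,n\in\mathbb{Z}} \bigl|\langle f | \phi_{(m,n,l)}^{P,Q}\rangle\bigr|^2 \le \frac{\beta_o\, \mu_l}{N \max_{i} \mu_i}\,\|f\|^2.
\]
Summing over $l \in \{1,\dots,N\}$ and using $\min_i \mu_i \le \mu_l \le \max_i \mu_i$, so that $\sum_l \mu_l \ge N\min_i\mu_i$ and $\sum_l \mu_l \le N\max_i\mu_i$, collapses the constants to the asserted frame bounds $\alpha_o$ and $\beta_o$.

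I expect the only genuinely delicate point to be the uniformity of the Parseval step across $n$: one must verify that the orthonormal exponential basis serves on the shifting window $I_{n,l}$ without introducing an $n$-dependent constant, which relies precisely on the $\mu_l$-periodicity of $e^{2\pi i m x/\mu_l}$ together with hypothesis $(i)$ tuning the modulation spacing to the reciprocal window length. By contrast, hypothesis $(ii)$ enters only as a standing admissibility requirement, ensuring the density statement for $\{\phi_{(m,n,l)}^{P,Q}\}$ recorded just before the theorem, and is not invoked quantitatively in the frame bound computation.
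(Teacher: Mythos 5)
Your proposal is correct and follows essentially the same route as the paper: isolate the unimodular phase, use hypothesis $(i)$ to recognize the modulations $e^{2\pi i m x/\mu_l}$ as (a multiple of) an orthonormal basis on an interval of length $\mu_l$ containing $\operatorname{supp}\phi_l(\cdot+nq_0^{(l)})$, apply Parseval to collapse the $m$-sum, then sum over $n$ and $l$ using \eqref{e1} and $\min_i\mu_i\le\mu_l\le\max_i\mu_i$. Your write-up is in fact somewhat more careful than the paper's (explicitly justifying the Parseval step and the Tonelli interchange), but the underlying argument is identical.
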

\proof
Let $f\in L^2(\mathbb{R},dx)$ be arbitrary. By invoking the Parseval's theorem, we compute
\begin{align}{\label{h1}}
\sum_{l = 1}^{N}\sum_{m,n\in \mathbb{Z}}|\langle f|\phi_{(m,n,l)}^{P,Q}\rangle|^2
 &= \sum_{l = 1}^{N}\sum_{m,n\in \mathbb{Z}}\bigg|\int_{\mathbb{R}} \overline{f(x)}e^{i Px \frac{2\pi}{P\mu_l}m}\phi_l(x+nq_0^{(l)})dx\bigg|^2 \notag\\
&=\sum_{l = 1}^{N}\sum_{m,n\in \mathbb{Z}}\bigg|\int_{\mathbb{R}}\overline{f(y-nq_0^{(l)})}e^{i(y-nq_0^{(l)})\frac{2\pi}{\mu_l}m}\phi_l(y)dy\bigg|^2 \notag\\
&=\sum_{l = 1}^{N}\sum_{m,n\in \mathbb{Z}}\bigg|\int_{\mathbb{R}}\overline{f(y-nq_0^{(l)})}e^{i y\frac{2\pi}{\mu_l}m}\phi_l(y)dy\bigg|^2 \notag \\
&=\mu_1\sum_{n \in \mathbb{Z}}\int_{\mathbb{R}}|f(y-nq_0^{(1)})\phi_1(x)|^2dx+ \mu_2\sum_{n \in \mathbb{Z}}\int_{\mathbb{R}}|f(y-nq_0^{(2)})\phi_2(x)|^2dx \notag\\
&+\dots+\mu_N\sum_{n \in \mathbb{Z}}\int_{\mathbb{R}}|f(y-nq_0^{(N)})\phi_N(x)|^2dx \notag\\
&= \mu_1 \sum_{n \in \mathbb{Z}}\int_{\mathbb{R}}|f(x)|^2|\phi_1(x+nq_0^{(1)})|^2dx + \mu_2 \sum_{n \in \mathbb{Z}}\int_{\mathbb{R}}|f(x)|^2|\phi_2(x+nq_0^{(2)})|^2dx \notag\\
&+\dots+ \mu_N \sum_{n \in \mathbb{Z}}\int_{\mathbb{R}}|f(x)|^2|\phi_N(x+nq_0^{(N)})|^2dx \notag \\
&= \mu_1 \int_{\mathbb{R}}|f(x)|^2 \sum_{n \in \mathbb{Z}}|\phi_1(x+nq_0^{(1)})|^2dx + \mu_2 \int_{\mathbb{R}}|f(x)|^2 \sum_{n \in \mathbb{Z}}|\phi_2(x+nq_0^{(2)})|^2dx \notag \\
&+ \dots
 + \mu_N \int_{\mathbb{R}}|f(x)|^2 \sum_{n \in \mathbb{Z}}|\phi_N(x+nq_0^{(N)})|^2dx \notag \\
 &\geq \mu_1\frac{\alpha_o
 }{N \min\limits_{1 \leq i \leq N} \mu_i}\int_{\mathbb{R}}|f(x)|^2dx + \mu_2\frac{\alpha_o}{N \min\limits_{1 \leq i \leq N} \mu_i}\int_{\mathbb{R}}|f(x)|^2dx \notag\\
 & + \dots + \mu_N\frac{\alpha_o}{N \min\limits_{1 \leq i \leq N} \mu_i}\int_{\mathbb{R}}|f(x)|^2dx \notag\\
&\geq \mu_1\frac{\alpha_o}{N \min\limits_{1 \leq i \leq N} \mu_i}\|f\|^2 + \mu_2\frac{\alpha_o}{N \min\limits_{1 \leq i \leq N} \mu_i}\|f\|^2 + \dots +
\mu_N\frac{\alpha_o}{N \min\limits_{1 \leq i \leq N} \mu_i}\|f\|^2 \notag\\
&\geq \min\limits_{1 \leq i \leq N} \mu_i \frac{\alpha_o}{N \min\limits_{1 \leq i \leq N} \mu_i}\|f\|^2 + \min\limits_{1 \leq i \leq N} \mu_i\frac{\alpha_o}{N \min\limits_{1 \leq i \leq N} \mu_i}\|f\|^2 \notag\\
&+ \dots +\min\limits_{1 \leq i \leq N} \mu_i \frac{\alpha_o}{N \min\limits_{1 \leq i \leq N} \mu_i}\|f\|^2 \notag \\
&= \alpha_o \|f\|^2.
\end{align}
Similarly, for any  $f\in L^2(\mathbb{R},dx)$, we have
\begin{align}{\label{h2}}
&\sum_{l = 1}^{N}\sum_{m,n\in \mathbb{Z}}|\langle f|\phi_{(m,n,l)}^{P,Q}\rangle|^2 \notag\\
& \leq \mu_1\frac{\beta_o}{N \max\limits_{1 \leq i \leq N} \mu_i}\int_{\mathbb{R}}|f(x)|^2dx + \mu_2\frac{\beta_o}{N \max\limits_{1 \leq i \leq N} \mu_i}\int_{\mathbb{R}}|f(x)|^2dx + \dots +
\mu_N\frac{\beta_o}{N \max\limits_{1 \leq i \leq N} \mu_i}\int_{\mathbb{R}}|f(x)|^2dx \notag\\
&\leq \mu_1\frac{\beta_o}{N \max\limits_{1 \leq i \leq N} \mu_i} \|f\|^2 + \mu_2\frac{\beta_o}{N \max\limits_{1 \leq i \leq N} \mu_i} \|f\|^2+\dots+ \mu_N\frac{\beta_o}{N \max\limits_{1 \leq i \leq N} \mu_i} \|f\|^2 \notag\\
& \leq \max\limits_{1 \leq i \leq N} \mu_i \frac{\beta_o}{N \max\limits_{1 \leq i \leq N} \mu_i} \|f\|^2 + \max\limits_{1 \leq i \leq N} \mu_i \frac{\beta_o}{N \max\limits_{1 \leq i \leq N} \mu_i} \|f\|^2+\dots+ \max\limits_{1 \leq i \leq N} \mu_i \frac{\beta_o}{N \max\limits_{1 \leq i \leq N} \mu_i} \|f\|^2 \notag \\
&= \beta_o \|f\|^2.
\end{align}
Inequalities $(\ref{h1})$ and $(\ref{h2})$  concludes the result.
\endproof

\begin{rem}
For $N > 1$, the converse of Theorem $\ref{t1}$ is not true. This is justified in Example \ref{eg1}, where following result is used.
\end{rem}
\begin{thm}\cite[Theorem 11.4.2]{ole}\label{t2}
Let $a$, $b>0$ and $g$ be a non-zero function in  $L^2(\mathbb{R},dx)$. Suppose that
\begin{align*}
& \beta_o:=\frac{1}{b}\sup_{x\in[0,a]}\sum_{k\in \mathbb{Z}}\bigg|\sum_{n\in \mathbb{Z}}g(x-na)\overline{g(x-na-k/b)}\bigg|<\infty,
\intertext{and}
&\gamma_o:=\frac{1}{b}\inf_{x\in[0,a]}\biggr[\sum_{n\in \mathbb{Z}}|g(x-na)|^2- \sum_{k \neq 0}\bigg|\sum_{n \in \mathbb{Z}}g(x-na)\overline{g(x-na-k/b)}\bigg|\biggr] > 0.
\end{align*}
Then, $\{e^{2\pi imbx}g(x-na)\}_{m,n\in \mathbb{Z}}$ is a frame for $ L^2(\mathbb{R},dx)$ with frame  bounds $\gamma_o$, $\beta_o$.
\end{thm}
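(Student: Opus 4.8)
The plan is to derive the Walnut representation of the quadratic form $\sum_{m,n}|\langle f, g_{m,n}\rangle|^2$, where $g_{m,n}:=E_{mb}T_{na}g$ so that $g_{m,n}(x)=e^{2\pi imbx}g(x-na)$, and then to read off the two frame bounds directly from the quantities $\beta_o$ and $\gamma_o$. Because the computation that follows establishes the upper (Bessel) estimate, it suffices to verify both inequalities for $f$ in the dense class of bounded, compactly supported functions and then pass to all of $L^2(\mathbb{R},dx)$ by density; the finiteness hypothesis $\beta_o<\infty$ is exactly what licenses the interchanges of summation and integration involved.

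First I would fix $n$ and treat the inner sum over $m$ as a Fourier-series identity. Since $\{\sqrt{b}\,e^{2\pi imbx}\}_{m\in\mathbb{Z}}$ is an orthonormal basis of $L^2([0,1/b])$, applying Parseval's identity to the $1/b$-periodization of $x\mapsto f(x)\overline{g(x-na)}$ gives
\[
\sum_{m\in\mathbb{Z}}|\langle f, g_{m,n}\rangle|^2=\frac{1}{b}\int_0^{1/b}\Big|\sum_{k\in\mathbb{Z}}f(x+k/b)\overline{g(x+k/b-na)}\Big|^2\,dx.
\]
Expanding the modulus square, reindexing the resulting double sum, merging the integral over $[0,1/b]$ into an integral over $\mathbb{R}$, and finally summing over $n$ collapses the expression into the Walnut form
\[
\sum_{m,n}|\langle f, g_{m,n}\rangle|^2=\frac{1}{b}\sum_{k\in\mathbb{Z}}\int_{\mathbb{R}}\overline{f(x)}\,f(x-k/b)\,G_k(x)\,dx,\qquad G_k(x):=\sum_{n\in\mathbb{Z}}g(x-na)\overline{g(x-na-k/b)},
\]
in which the correlation functions $G_k$ are precisely those in the statement and each $G_k$ is $a$-periodic.

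Next I would split off the diagonal term $k=0$, which contributes $\frac1b\int_{\mathbb{R}}|f(x)|^2G_0(x)\,dx$ with $G_0(x)=\sum_n|g(x-na)|^2$, and control the off-diagonal terms using the elementary inequality $2|f(x)||f(x-k/b)|\le|f(x)|^2+|f(x-k/b)|^2$, the change of variables $y=x-k/b$ together with the symmetry $|G_k(x+k/b)|=|G_{-k}(x)|$, and the $a$-periodicity of the $G_k$. For the upper estimate this bounds the whole sum by $\frac1b\big(\sup_{x\in[0,a]}\sum_k|G_k(x)|\big)\|f\|^2=\beta_o\|f\|^2$; for the lower estimate the same off-diagonal bound, applied only over $k\neq0$, leaves $\frac1b\int_{\mathbb{R}}|f(x)|^2\big[G_0(x)-\sum_{k\neq0}|G_k(x)|\big]\,dx\ge\gamma_o\|f\|^2$. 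The reduction of the supremum and infimum from $\mathbb{R}$ to $[0,a]$ is immediate from periodicity.

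I expect the main obstacle to be the analytic bookkeeping rather than any single inequality: justifying absolute convergence so that the $m$-, $n$- and $k$-summations may be exchanged with the integral, and carrying the correct conjugates and shifts through the symmetrization of the off-diagonal terms. The hypothesis $\beta_o<\infty$ is what guarantees that $\sum_k|G_k(x)|$ is finite and controls these rearrangements, thereby both supplying the Bessel bound needed for the density argument and making the Walnut identity rigorous.
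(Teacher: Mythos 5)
Your proposal is correct, and it is essentially the proof of this result as it appears in the cited source: the paper itself states Theorem \ref{t2} without proof, importing it from Christensen \cite[Theorem 11.4.2]{ole}, and the argument there is exactly your Walnut-representation route (periodize, apply Parseval on $L^2([0,1/b])$, isolate the $k=0$ term, symmetrize the off-diagonal terms via $2|f(x)||f(x-k/b)|\le|f(x)|^2+|f(x-k/b)|^2$ and $|G_k(x+k/b)|=|G_{-k}(x)|$, and extend from bounded compactly supported $f$ by density using the Bessel bound). No gaps; the points you flag as delicate (absolute convergence under $\beta_o<\infty$, the conjugate/shift bookkeeping) are precisely the ones the standard proof handles the same way.
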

Now, we show that, for $N > 1$,  the converse of Theorem $\ref{t1}$ is not true.
\begin{exa}\label{eg1}
Let $P=1$, $Q=0$ and define a function $\phi_1 \in L^2(\mathbb{R},dx)$, with $\mu_1=2$, as follows:
\begin{align*}
\phi_1(x)=\begin{cases}
1+x, \quad & \text{if}\ x \in (0,1]; \\
x, \quad & \text{if}\ x \in (1,2];\\
0, \quad & \text{elsewhere}.
\end{cases}
\end{align*}
Choose $q_0^{(1)}=1$, $a=1$ and $b=\frac{1}{\mu_1}=\frac{1}{2}$. Consider for $n$, $k\in \mathbb{Z}$, the function $x \rightarrow \phi_1(x-n)\phi_1(x-n-2k)$ for $x\in (0,1]$. Due to the compact support of $\phi_1$, it can only be non-zero if $n \in \{-1,0\}$ and $k=0$. So,\\
\begin{align*}
& G_0(x)=\sum_{n \in \mathbb{Z}}|g(x-n)|^2=|g(x)|^2+|g(x+1)|^2=2(x^2+2x+1) \ \text{for}\ x\in (0,1],\\
\intertext{and}
& G_1(x)=\sum_{k \neq 0}\bigg|\sum_{n \in \mathbb{Z}}g(x-n)\overline{g(x-na-2k)}\bigg|=0 \ \text{for} \ x \in (0,1].
\end{align*}
Therefore, by Theorem \ref{t2}, $\{e^{2 ix\frac{m}{\mu_1}\pi}\phi_1(x+n)\}_{m,n\in \mathbb{Z}}$ is a frame for $L^2(\mathbb{R},dx)$ with frame bounds $4$ and $16$. Hence, $\big\{\big|\phi_{(m,n,1)}^{1,0}\big>\big\}_{m,n\in \mathbb{Z}}$ is a frame for $L^2(\mathbb{R},dx)$ with frame bounds $4$ and $16$.

Similarly, for the following function  $\phi_2$ in $L^2(\mathbb{R},dx)$,
\begin{align*}
\phi_2(x)=\begin{cases}
1+x, \quad & \text{if}\ x \in (0,1], \\
\frac{x}{2}, \quad & \text{if}\ x \in (1,2],\\
0, \quad & \text{elsewhere}.
\end{cases}
\end{align*}
 with $q_0^{(2)}=1, a=1$ and $b=\frac{1}{\mu_2}=\frac{1}{2}$, the sequence $\big\{\big|\phi_{(m,n,2)}^{1,0}\big>\big\}_{m,n\in \mathbb{Z}}$  is a frame for $L^2(\mathbb{R},dx)$ with frame bounds $\frac{1}{4}$ and $9$.
Thus, for all $f  \in L^2(\mathbb{R},dx)$, we have
\begin{align*}
\frac{17}{4}\|f\|^2 \leq \sum_{l =1}^{2}\sum_{m,n\in \mathbb{Z}}|\langle f|\phi_{(m,n,l)}^{1,0}\rangle|^2 \leq 25\|f\|^2.
\end{align*}
Hence,  the sequence $\{\big|\phi_{(m,n,l)}^{1,0}\big>\}_{m,n\in \mathbb{Z},l\in\{1,2\}}$ is a multivariate frame with bound $\frac{17}{4}$ and $25$.

One may observe that  for $x=\sqrt{3.5}$, we have $\sum_{n\in \mathbb{Z}}|\phi_1(x-n)|^2=7$. But this is not possible as inequality (\ref{e1}) implies that
\begin{align*}
\frac{17}{16} \leq \sum\limits_{n\in \mathbb{Z}}|\phi_l(x-n)|^2 \leq \frac{25}{4} \ \ \text{for} \ l\in \{1,2\}, \ \text{for almost all} \ x\in \mathbb{R}.
\end{align*}
\end{exa}

\begin{rem}
The converse of Theorem \ref{t1} is true if inequality (\ref{e1}) is replaced by
\begin{align*}
\alpha_o \leq \sum_{l=1}^{N} \sum_{n\in \mathbb{Z}} \mu_l \big|\phi_l\big(x-nq_0^{(l)}\big)\big|^2 \leq \beta_o, \ \text{for almost all} \   x\in \mathbb{R}.
\end{align*}
\end{rem}
\subsection{Frames for Extended Affine Group}
In this section, we give existence of wavelet frames with several generators  associated with  extended affine group. Here, we fix a representation $\kappa^{P, Q}$ of the Weyl-Heisenberg group on $L^2(\mathbb{R})$; and let  $u(\epsilon)=u_0+\frac{P}{\epsilon}$ and $v(\epsilon)= v_0-\frac{P}{\epsilon}$ be such that $u_0 +v_0=Q$.  Define a sequence on extended affine group, $EA$, by
\begin{align*}
\big|\phi_{(m,n,l)}^{\epsilon, u(\epsilon), v(\epsilon)}\big>= \zeta_\epsilon ^{u(\epsilon), v(\epsilon)}(a,b,y)\phi_l,
\end{align*}
where $m,n \in \mathbb{Z}$ and $l \in \{1,2,\dots, N\}$ and the  symbol $\zeta_\epsilon ^{\bullet, \bullet}(.,.,.)$ is defined in Section \ref{sec2.3}. The following result gives multivariate wavelet frames for signal space $L^2(\mathbb{R},dx)$.

\begin{thm}\label{t3}
Under the assumptions of Theorem \ref{t1}, for any $\epsilon \in (0,1]$, let $EA_{q_0(l), p_0(l)}(\epsilon)$ be a  discrete subset of $EA$ given by
\begin{align*}
    \Big\{Q_n^{(l)}(\epsilon), b_{mn}^{(l)}(\epsilon), y_{mn}^{(l)}(\epsilon) | \  m, n \in \mathbb{Z}, l \in \{1,2,\dots, N\}\Big\},
  \end{align*}
where
\begin{align*}
a_n^{(l)}(\epsilon)&= e^{-\epsilon n q_0^{(l)}},\\
b_{mn}^{(l)}(\epsilon)&=\frac{\pi a_n^{(l)}(\epsilon)m \beta_o }{\alpha_o v(\epsilon)(ln(\epsilon+c)-ln c)c\mu_l}, \\
&=\frac{\beta_o e^{-\epsilon n q_0^{(l)}}\pi m  }{\alpha_o v(\epsilon)(ln(\epsilon+c)-ln c)c\mu_l} \\
y_{mn}^{(l)}(\epsilon)&=b_{mn}(\epsilon)
\frac{l_na_n^{(l)}(\epsilon)}{a_n^{(l)}(\epsilon)-1}.
\end{align*}
Then,  the collection  $\big\{\big|\phi^{\epsilon, u(\epsilon), v(\epsilon)}_{(m,n,l)}\big >\big\}_{m, n \in \mathbb{Z} \atop l \in \{1,2,\dots, N\}} = \Big\{\zeta_{\epsilon}^{u(\epsilon),
v(\epsilon)}(a_{n}^{(l)}(\epsilon), \beta_{mn}^{(l)}(\epsilon), \gamma_{mn}^{(l)}(\epsilon))(T_{\epsilon}\phi)\Big\}_{m, n \in \mathbb{Z} \atop l \in \{1,2,\dots, N\}}$,
where $T_{\epsilon}(x)=e^{\frac{-1}{2}\epsilon x}$, forms a  frame for $L^2(\mathbb{R}, dx)$ with frame bounds
\begin{align*}
\frac{\alpha_o^2 c}{\beta_o} \Big(\frac{ln(\epsilon +c) - ln c}{\epsilon}\Big) \ \quad   \text{and} \ \quad  \alpha_o c \Big(\frac{ln(\epsilon +c) - ln c}{\epsilon}\Big).
\end{align*}
\end{thm}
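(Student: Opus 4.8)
The plan is to prove Theorem~\ref{t3} by a direct computation that mirrors the proof of Theorem~\ref{t1}, transporting the Gabor estimate to the affine setting through the explicit intertwining encoded in $\zeta_\epsilon^{u(\epsilon),v(\epsilon)}$. First I would fix $f\in L^2(\mathbb{R},dx)$ and write out the frame sum $\sum_{l=1}^N\sum_{m,n\in\mathbb{Z}}|\langle f|\phi_{(m,n,l)}^{\epsilon,u(\epsilon),v(\epsilon)}\rangle|^2$ using the formula for $\zeta_\epsilon^{\beta,\alpha}$ from Section~\ref{sec2.3} together with $a_n^{(l)}(\epsilon)=e^{-\epsilon nq_0^{(l)}}$ and the weight $T_\epsilon(x)=e^{-\epsilon x/2}$. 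Since $\tfrac{-\log a_n^{(l)}(\epsilon)}{\epsilon}=nq_0^{(l)}$, the generator acts as a genuine translation by $nq_0^{(l)}$, while the leading factor $e^{iu(\epsilon)y_{mn}^{(l)}}$ is unimodular and hence disappears from $|\langle f|\cdot\rangle|^2$.

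The heart of the argument is two changes of variable. The substitution $s=x+nq_0^{(l)}$ pulls the dilation index $n$ out of the generator and, crucially, turns the modulation coefficient into $v(\epsilon)\,b_{mn}^{(l)}(\epsilon)\,e^{\epsilon nq_0^{(l)}}$, which by the very choice of $b_{mn}^{(l)}(\epsilon)$ collapses to $K_l(\epsilon)\,m$ with $K_l(\epsilon)=\tfrac{\beta_o\pi}{\alpha_o(\ln(\epsilon+c)-\ln c)\,c\,\mu_l}$, independent of $n$. The second substitution $w=e^{-\epsilon s}$ then linearizes the affine modulation $e^{iv(\epsilon)b_{mn}^{(l)}e^{-\epsilon s}}$ into an honest exponential $e^{iK_l(\epsilon)mw}$, at the price of the multiplicative Haar measure $\tfrac{dw}{w}$ and a Jacobian factor $\tfrac{1}{\epsilon}$. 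Here the weight $T_\epsilon$ (the source of a $\sqrt{w}$) is exactly what is needed to convert $\tfrac{dw}{w}$ back into Lebesgue measure, so that no stray density survives.

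With these reductions, $\sum_{m}|\langle f|\phi_{(m,n,l)}^{\epsilon,u(\epsilon),v(\epsilon)}\rangle|^2$ becomes a Parseval sum for a Fourier series in $w$ over an interval of length $\tfrac{2\pi}{K_l(\epsilon)}$: because $\mathrm{supp}\,\phi_l$ has length $\mu_l$, the integrand $w\mapsto \overline{f}\,\phi_l\,w^{-1/2}$ is supported in a bounded $w$-interval, and one checks that the sampling is chosen so that this support lies within one period. Parseval then gives $\sum_m|\cdots|^2$ equal to a constant multiple of $\tfrac{1}{\epsilon K_l(\epsilon)}\int_{\mathbb{R}}|f(s-nq_0^{(l)})|^2|\phi_l(s)|^2\,ds$; summing over $n$ produces $\int_{\mathbb{R}}|f(t)|^2\sum_{n}|\phi_l(t+nq_0^{(l)})|^2\,dt$. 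Applying the two-sided hypothesis~\eqref{e1} to the periodization $\sum_n|\phi_l(\cdot+nq_0^{(l)})|^2$, summing over $l$ with $N\min_i\mu_i\le\sum_l\mu_l\le N\max_i\mu_i$, and collecting the period factor (which by the definition of $b_{mn}^{(l)}(\epsilon)$ is a constant multiple of $\tfrac{\alpha_o c\mu_l}{\beta_o}\cdot\tfrac{\ln(\epsilon+c)-\ln c}{\epsilon}$) yields the asserted bounds $\tfrac{\alpha_o^2 c}{\beta_o}\tfrac{\ln(\epsilon+c)-\ln c}{\epsilon}$ and $\alpha_o c\,\tfrac{\ln(\epsilon+c)-\ln c}{\epsilon}$, the logarithmic factor being precisely the $w$-period built into $b_{mn}^{(l)}(\epsilon)$.

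The step I expect to be the main obstacle is the Fourier/Parseval reduction under the nonlinear substitution $w=e^{-\epsilon x}$. Unlike the linear modulation in Theorem~\ref{t1}, here the dilation structure of the affine group becomes a modulation only after passing to the $w$-variable, and the support of $\phi_l$—an interval of length $\mu_l$ in $x$—maps to an $\epsilon$-dependent interval in $w$ that is \emph{not} of fixed length. One must therefore verify carefully that $\{e^{iK_l(\epsilon)mw}\}_{m\in\mathbb{Z}}$ restricts to an orthogonal family that is complete over this $w$-support and that the support indeed fits inside a single period $\tfrac{2\pi}{K_l(\epsilon)}$; it is exactly this matching that forces the logarithmic factor $\tfrac{\ln(\epsilon+c)-\ln c}{\epsilon}$ and the degradation from the sharp Gabor constants $\alpha_o,\beta_o$ to $\tfrac{\alpha_o^2 c}{\beta_o}(\cdots)$ and $\alpha_o c(\cdots)$, and where the contraction parameters $c$ and $\epsilon$ genuinely enter.
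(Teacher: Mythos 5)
Your proposal follows essentially the same route as the paper's proof: both arguments use the observation that $-\log a_n^{(l)}(\epsilon)/\epsilon = nq_0^{(l)}$ together with the exponential change of variables to linearize the affine modulation into $e^{iK_l(\epsilon)mw}$ (the weight $T_\epsilon$ absorbing the density $dw/w$), then apply Parseval's theorem to collapse the $m$-sum, change variables back to recover the periodization $\sum_n|\phi_l(\cdot+nq_0^{(l)})|^2$, and finish by invoking hypothesis \eqref{e1} and summing over $l$ exactly as in Theorem \ref{t1}. The support-versus-period matching you flag as the main obstacle is indeed the delicate point, which the paper itself passes over by simply asserting Parseval on a finite-measure set $E_l$, so your treatment is, if anything, the more careful one.
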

\proof
For any  $f \in L^2(\mathbb{R},dx)$, we compute
\begin{align*}
&\sum_{l = 1}^{N}
\underset{ m, n  \in \mathbb{Z}}{\sum} \bigg|\big \langle f| \phi^{\epsilon, u(\epsilon), v(\epsilon)}_{(m,n,l)} \big \rangle\bigg|^2  \\
&=\sum_{l = 1}^{N}
\underset{ m, n  \in \mathbb{Z}}{\sum}\bigg|\int_\mathbb{R}\overline{f(x)} e^{i v(\epsilon)b_{mn}^{(l)}(\epsilon)e^{-\epsilon x}}e^{\frac{-\epsilon}{2}(x-\frac{lna_n^{(l)}(\epsilon)}{\epsilon})} \phi_l(x-\frac{ln a_n^{(l)}(\epsilon)}{\epsilon})dx\bigg|^2  \\
&=\sum_{l = 1}^{N}
\underset{ m, n  \in \mathbb{Z}}{\sum}\bigg|\int_\mathbb{R}\overline{f(x)} e^{i v(\epsilon)b_{mn}^{(l)}(\epsilon)e^{-\epsilon x}}e^{\frac{-\epsilon}{2}(x+nq_0^{(l)})}\phi_l(x+nq_0^{(l)})dx\bigg|^2   \\
 &=\sum_{l = 1}^{N}
 \underset{ m, n  \in \mathbb{Z}}{\sum}\bigg|\int_{\mathbb{R}^+}\overline{f(\frac{-\ln{s}}{\epsilon})} e^{i v(\epsilon)b_{mn}^{(l)}(\epsilon)s}e^{\frac{-\epsilon}{2}(\frac{-\ln{s}}{\epsilon}+nq_0^{(l)})}\phi_l(\frac{-\ln{s}}{\epsilon}+nq_0^{(l)})\frac{ds}{\epsilon s}\bigg|^2  \\
 &=\sum_{l = 1}^{N}
 \underset{ m, n  \in \mathbb{Z}}{\sum}\bigg|\int_{\mathbb{R}^+}\overline{f(\frac{-\ln{s}}{\epsilon})} e^{i v(\epsilon)b_{mn}^{(l)}(\epsilon)s}e^{\frac{-\epsilon}{2}(nq_0^{(l)})}\phi_l(\frac{-\ln{s}}{\epsilon}+nq_0^{(l)})\frac{ds}{\epsilon \sqrt{s}}\bigg|^2   \\
 &=\sum_{l = 1}^{N}
 \underset{ m, n  \in \mathbb{Z}}{\sum}\bigg|\int_{E_l}\overline{f(\frac{-\ln{t(a_n^{(l)}(\epsilon))}^{-1}}{\epsilon})} e^{i v(\epsilon)b_{mn}^{(l)}(\epsilon)\frac{t}{a_n^{(l)}(\epsilon)}}e^{\frac{-\epsilon}{2}(nq_0^{(l)})}\phi_l(\frac{-\ln{t}}{\epsilon})\frac{dt}{\epsilon \sqrt{a_n(\epsilon)t}}\bigg|^2 \nonumber\\
 &=\sum_{l = 1}^{N}
 \underset{ m, n  \in \mathbb{Z}}{\sum}\bigg|\int_{E_l}\overline{f(\frac{-\ln{t(a_n^{(l)}(\epsilon))}^{-1}}{\epsilon})} e^{\frac{iB_o\pi m}{A_o(\ln{\epsilon+c}- \ln{c})\mu_lc}}e^{\frac{-\epsilon}{2}(nq_0^{(l)})}\phi_l(\frac{-\ln{t}}{\epsilon})\frac{dt}{\epsilon \sqrt{a_n(\epsilon)t}}\bigg|^2\\
 & = (\star \star),
 \end{align*}
 where $E_l \subseteq \mathbb{R}$ with finite measure depending on $\mu_l$ $(l \in \{1,2, \dots, N\})$.

By invoking  the Parseval's theorem, we get that
 \begin{align}\label{Eq3.6II}
(\star \star)
 &=\sum_{l = 1}^{N} \frac{\alpha_oc\mu_l(\ln{\epsilon+c}-\ln{c})}{B_o}
 \underset{ n \in \mathbb{Z}}{\sum}\int_{E_l}\bigg|\overline{f(\frac{-\ln{t(a_n^{(l)}(\epsilon))}^{-1}}{\epsilon})} e^{\frac{-\epsilon}{2}(nq_0^{(l)})}\phi_l(\frac{-\ln{t}}{\epsilon})\frac{1}{\epsilon \sqrt{a_n(\epsilon)t}}\bigg|^2dt \nonumber\\
 &=\sum \sum_{l = 1}^{N} \frac{\alpha_oc\mu_l(\ln{\epsilon+c}-\ln{c})}{\beta_o\epsilon}
 \underset{ n \in \mathbb{Z}}{\sum}\int_{\mathbb{R}^+}\bigg|\overline{f(\frac{-\ln{t(a_n^{(l)}(\epsilon))}^{-1}}{\epsilon})} e^{\frac{-\epsilon}{2}(nq_0^{(l)})}\phi_l(\frac{-\ln{t}}{\epsilon})\bigg|^2\frac{1}{\epsilon t}dt.
\end{align}
Change of variables $s=\frac{t}{a_n^{(l)}(\epsilon)}$ and then $x=\frac{-\ln{s}}{\epsilon}$, in Eq. \eqref{Eq3.6II},  gives
\begin{align}\label{Eq3.7II}
&\sum_{l = 1}^{N}
\underset{ m, n  \in \mathbb{Z}}{\sum} \big|\big \langle f| \phi^{\epsilon, u(\epsilon), v(\epsilon)}_{(m,n,l)} \big \rangle \big|^2 \nonumber\\
&= \sum_{l = 1}^{N} \frac{\alpha_o(\ln{(\epsilon+c})-\ln{c})c\mu_l}{\beta_o\epsilon}\underset{ n \in \mathbb{Z}}{\sum}\int_{\mathbb{R}^+}\bigg|\overline{f(\frac{-\ln{s}}{\epsilon})} \phi_l(\frac{-\ln{s}a_n^{(l)}(\epsilon)}{\epsilon})\bigg|^2\frac{1}{\epsilon s}ds
 \nonumber \\
 &= \sum_{l = 1}^{N} \frac{\alpha_o(\ln{(\epsilon+c})-\ln{c})c\mu_l}{B_o\epsilon}\underset{ n \in \mathbb{Z}}{\sum}\int_{\mathbb{R}^+}\big|\overline{f(x)} \phi_l(x+nq_0^{(l)})\big|^2dx.
\end{align}
Using Eq. \eqref{Eq3.7II}, we have
    \begin{align*}
\frac{\alpha_o^2(ln{(\epsilon+c)}-\ln{c})c||f||^2}{\beta_o\epsilon} \leq \sum_{ l \in \{1,2, \dots, N\}}\underset{ m, n  \in \mathbb{Z}}{\sum} \bigg|\big \langle f| \phi^{\epsilon, u(\epsilon), v(\epsilon)}_{(m,n,l)} \big \rangle \bigg|^2
\leq \frac{\alpha_o(ln{(\epsilon+c)}-\ln{c})c||f||^2}{\epsilon},
\end{align*}
for all  $f \in L^2(\mathbb{R},dx)$. Hence, $\{\phi^{\epsilon, u(\epsilon), v(\epsilon)}_{(m,n,l)} \}_{m, n \in \mathbb{Z}, l \in \{1,2,\dots, N\} }$ is a  frame for $L^2(\mathbb{R},dx)$ with the desired frame bounds.
\endproof

Next theorem gives us the relation between  frames with several generators for Weyl-Heisenberg group and extended affine group.

\begin{thm} \label{t4}
Under the assumptions of Theorem \ref{t1}, for any $f \in L^2(\mathbb{R}, dx)$, we have
\begin{align*}
\lim_{\epsilon \to 0^+ }\sum_{l = 1}^{N}
 \underset{ m, n  \in \mathbb{Z}}{\sum} |\langle f, \phi^{\epsilon, u(\epsilon), v(\epsilon)}_{(m,n,l)} \rangle|^2 \leq \sum_{l = 1}^{N}
 \underset{ m, n  \in \mathbb{Z}}{\sum} |\langle f, \phi^{P, Q}_{(m,n,l)} \rangle|^2 \leq \beta_o||f||^2
\end{align*}
\end{thm}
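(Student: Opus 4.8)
The plan is to split the displayed chain into its two inequalities and dispatch each with a frame bound already proved, gluing the two halves together with a single elementary limit. First I would dispose of the rightmost inequality, which needs no new computation: under the hypotheses of Theorem~\ref{t1} the system $\{|\phi^{P,Q}_{(m,n,l)}\rangle\}$ is a frame for $L^2(\mathbb{R},dx)$ with upper bound $\beta_o$, so the upper frame inequality gives $\sum_{l=1}^{N}\sum_{m,n}|\langle f,\phi^{P,Q}_{(m,n,l)}\rangle|^2\le\beta_o\|f\|^2$ at once.

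For the middle inequality I would route through the constant $\alpha_o\|f\|^2$. By Theorem~\ref{t3}, for every $\epsilon\in(0,1]$ the wavelet quantity is dominated by its own upper frame bound, $\sum_{l,m,n}|\langle f,\phi^{\epsilon,u(\epsilon),v(\epsilon)}_{(m,n,l)}\rangle|^2\le\alpha_o c\big(\frac{\ln(\epsilon+c)-\ln c}{\epsilon}\big)\|f\|^2$. The only $\epsilon$-dependence on the right is the difference quotient of the logarithm, and since $\frac{d}{dx}\ln x=1/x$ we have $\lim_{\epsilon\to0^+}\frac{\ln(\epsilon+c)-\ln c}{\epsilon}=\frac1c$; hence the bound converges to $\alpha_o c\cdot\frac1c\,\|f\|^2=\alpha_o\|f\|^2$. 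Letting $\epsilon\to0^+$ therefore yields $\lim_{\epsilon\to0^+}\sum_{l,m,n}|\langle f,\phi^{\epsilon,u(\epsilon),v(\epsilon)}_{(m,n,l)}\rangle|^2\le\alpha_o\|f\|^2$, and invoking the \emph{lower} frame inequality of Theorem~\ref{t1}, namely $\alpha_o\|f\|^2\le\sum_{l,m,n}|\langle f,\phi^{P,Q}_{(m,n,l)}\rangle|^2$, closes the gap; chaining the three estimates finishes the proof.

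The point that needs care is the passage to the limit on the left, since the displayed bound strictly controls only the $\limsup$. To claim the genuine limit I would fall back on the explicit evaluation \eqref{Eq3.7II} derived in the proof of Theorem~\ref{t3}, where the wavelet sum equals $\big(\frac{\ln(\epsilon+c)-\ln c}{\epsilon}\big)\cdot\frac{\alpha_o c}{\beta_o}\sum_l\mu_l\sum_n\int_{\mathbb{R}^+}|f(x)|^2|\phi_l(x+nq_0^{(l)})|^2\,dx$. Here the sole $\epsilon$-factor is again the logarithmic difference quotient, so the limit genuinely exists and equals $\frac{\alpha_o}{\beta_o}\sum_l\mu_l\sum_n\int_{\mathbb{R}^+}|f|^2|\phi_l(\cdot+nq_0^{(l)})|^2$. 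This route also makes the middle inequality transparent without detouring through $\alpha_o\|f\|^2$: because $\alpha_o/\beta_o\le1$ and $\int_{\mathbb{R}^+}\le\int_{\mathbb{R}}$, the limit is dominated term-by-term by the Gabor expression $\sum_l\mu_l\int_{\mathbb{R}}|f|^2\sum_n|\phi_l(\cdot+nq_0^{(l)})|^2=\sum_{l,m,n}|\langle f,\phi^{P,Q}_{(m,n,l)}\rangle|^2$ computed in \eqref{h1}.

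I expect the main obstacle to be purely bookkeeping rather than conceptual: confirming that no hidden $\epsilon$-dependence survives inside the integrals after the change of variables of Theorem~\ref{t3}, and correctly matching the domain of integration ($\mathbb{R}^+$ against $\mathbb{R}$) together with the surviving factor $\alpha_o/\beta_o$ when comparing the limiting wavelet expression with the Gabor expression. Once those two items are checked, both the existence of the limit and its domination by the Weyl--Heisenberg sum follow immediately, and the inequality $\alpha_o/\beta_o\le1$ (which holds because $\alpha_o\le\beta_o$ are frame bounds) is what makes the comparison go through.
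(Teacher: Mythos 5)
Your proposal is correct and follows essentially the same route as the paper: bound the wavelet sum by the upper frame bound of Theorem \ref{t3}, evaluate $\lim_{\epsilon\to0^+}\frac{\ln(\epsilon+c)-\ln c}{\epsilon}=\frac{1}{c}$ to arrive at $\alpha_o\|f\|^2$, and then chain through the lower and upper frame inequalities of Theorem \ref{t1}. Your additional remark that the stated bound only controls the $\limsup$, and that the genuine limit exists because of the explicit $\epsilon$-factorization in \eqref{Eq3.7II}, is a point the paper passes over silently, but it does not alter the argument.
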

\proof
For any  $f \in L^2(\mathbb{R}, dx)$,  we have
\begin{align*}
\lim_{\epsilon \to 0^+ }\sum_{l = 1}^{N}
 \underset{ m, n  \in \mathbb{Z}}{\sum} |\langle f, \phi^{\epsilon, u(\epsilon), v(\epsilon)}_{(m,n,l)} \rangle|^2
 &\leq \lim_{\epsilon \to 0^+ }\frac{\alpha_o(\ln{(\epsilon+c)}-\ln{c})c||f||^2}{\epsilon}\\
 &= \lim_{\epsilon \to 0^+ }\frac{\alpha_o c}{\epsilon + c}||f||^2\\
 &=\alpha_o||f||^2\\
 &\leq \sum_{l = 1}^{N}
 \underset{ m, n  \in \mathbb{Z}}{\sum} |\langle f, \phi^{P, Q}_{(m,n,l)} \rangle|^2\\
 &\leq \beta_o||f||^2.
\end{align*}
This concludes the proof.
\endproof

The following example illustrates Theorem \ref{t4}.
\begin{exa}
Consider the Weyl-Heisenberg frame $\big\{\big|\phi_{(m,n,1)}^{1,0}\big>\big\}_{m,n\in \mathbb{Z}}$  for $L^2(\mathbb{R},dx)$ with frame bounds $4$ and $16$ given in Example \ref{eg1}.
Let $u_0=-1, v_0=1$ and $c=1$ be a fixed constant.

Define
\begin{align*}
a_n^{(1)}(\epsilon)&= e^{-\epsilon n};\\
b_{mn}^{(1)}(\epsilon)&=\frac{16 \pi a_n^{(1)}(\epsilon)m}{4v(\epsilon)(ln(\epsilon+1)-ln 1)\mu_1}
=\frac{2e^{-\epsilon n}\pi m  }{(1-\frac{1}{\epsilon})(ln(\epsilon+1)-ln 1)}; \\
y_{mn}^{(1)}(\epsilon)&=b_{mn}(\epsilon)
\frac{l_na_n^{(1)}(\epsilon)}{a_n^{(1)}(\epsilon)-1}.
\end{align*}
Then, it is easy to see that $2 \leq \sum_{n\in \mathbb{Z}}|\phi_l(x-n)|^2 \leq 8$ for almost all $x\in \mathbb{R}$.
Hence, by Theorem \ref{t3}, the collection  $\big\{\big|\phi^{\epsilon, u(\epsilon), v(\epsilon)}_{(m,n,1)}\big >\big\}_{m, n \in \mathbb{Z}}$ is a frame with frame bounds $ \frac{ln(\epsilon+1)}{\epsilon}$ and $ \frac{4ln(\epsilon+1)}{\epsilon}$.

Now, for any $f \in L^2(\mathbb{R}, dx)$, we have
\begin{align*}
\lim_{\epsilon \to 0^+ }\sum_{ m, n  \in \mathbb{Z}} |\langle f, \phi^{\epsilon, u(\epsilon), v(\epsilon)}_{(m,n,1)} \rangle|^2
 &\leq \lim_{\epsilon \to 0^+ } \frac{4ln(\epsilon+1)}{\epsilon}\|f\|^2\\
 & \leq 4 \|f\|^2 \\
&\leq \sum_{m, n  \in \mathbb{Z}} |\langle f, \phi^{1,0}_{(m,n,1)} \rangle|^2\\
 &\leq 16||f||^2.
\end{align*}
\end{exa}

\section{A Perturbation Result}\label{sec4}
The importance of  perturbation theory is well-known and has been extensively studied, see \cite{Kato}. For a variety of perturbation results for different types of frames, we refer to \cite{CK,  FZUS, Heil20}. In this section, we give Paley-Wiener type perturbation results for multivariate frames for $L^2(\mathbb{R}, dx)$.  The following result shows that frames for $L^2(\mathbb{R}, dx)$ associated with the Weyl-Heisenberg group   are stable under small perturbation.
\begin{thm}\label{pthm1}
 Let $\big\{\big|\phi^{P, Q}_{(m,n,l)}\big> \big\}_{m, n \in \mathbb{Z} \atop l \in \{1,2,\dots, N\} }$ be a  frame for $L^2(\mathbb{R}, dx)$ with frame bounds $\alpha_o$ and $\beta_o$ and let $\big\{\big|\Tilde{\phi}^{P,Q}_{(m,n,l)}\big> \big\}_{m, n \in \mathbb{Z} \atop l \in \{1,2,\dots, N\} } \subset  L^2(\mathbb{R}, dx)$.  For non-negative real number $\lambda$ and $M$ with  $\lambda <\alpha_o$, assume that
\begin{align}\label{p1}
& \sum_{l = 1}^{N}\sum_{m,n\in\mathbb{Z}} |\langle f| \phi^{P, Q}_{(m,n,l)}- \Tilde{\phi}^{P, Q}_{(m,n,l)} \rangle|^2 \nonumber \\
&\leq M \min \Big\{\sum_{l = 1}^{N} \sum_{m,n\in\mathbb{Z}} |\langle f| \phi^{P, Q}_{(m,n,l)}\rangle|^2, \  \sum_{l = 1}^{N} \sum_{m,n\in\mathbb{Z}} |\langle f| \Tilde{\phi}^{P, Q}_{(m,n,l)} \rangle|^2
\Big\} + \lambda ||f||^2,
\end{align}
  for all  $f \in L^2(\mathbb{R}, dx)$.  Then, $\big\{\big|\Tilde{\phi}^{P,Q}_{(m,n,l)}\big> \big\}_{m, n \in \mathbb{Z} \atop  l \in \{1,2,\dots, N\} }$ is a frame for $L^2(\mathbb{R}, dx)$ with frame bounds
\begin{align*}
\frac{1}{2(M+1)}\Big( 1-\frac{\lambda}{\alpha_o} \Big)\alpha_o \quad  \text{and} \quad  \Big( 2\beta_o(M+1)+\lambda \Big).
 \end{align*}
\end{thm}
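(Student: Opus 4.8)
The plan is to reduce the whole statement to three scalar quantities attached to a fixed $f \in L^2(\mathbb{R},dx)$ and then to squeeze $\widetilde{S}$ between multiples of $\|f\|^2$ by means of the triangle inequality in $\ell^2$. First I would abbreviate $S=\sum_{l=1}^{N}\sum_{m,n\in\mathbb{Z}}|\langle f|\phi^{P,Q}_{(m,n,l)}\rangle|^2$, and define $\widetilde{S}$ and $D$ by the same double sum with $\phi^{P,Q}_{(m,n,l)}$ replaced by $\widetilde{\phi}^{P,Q}_{(m,n,l)}$ and by $\phi^{P,Q}_{(m,n,l)}-\widetilde{\phi}^{P,Q}_{(m,n,l)}$, respectively. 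Because $\{\phi^{P,Q}_{(m,n,l)}\}$ is a frame with bounds $\alpha_o,\beta_o$ we have $\alpha_o\|f\|^2\le S\le\beta_o\|f\|^2$, while the hypothesis \eqref{p1} reads $D\le M\min\{S,\widetilde{S}\}+\lambda\|f\|^2$; since the minimum is dominated by each of its arguments, the two estimates $D\le MS+\lambda\|f\|^2$ and $D\le M\widetilde{S}+\lambda\|f\|^2$ hold simultaneously. Regarding $(\langle f|\phi^{P,Q}_{(m,n,l)}\rangle)$, $(\langle f|\widetilde{\phi}^{P,Q}_{(m,n,l)}\rangle)$ and their difference as elements of $\ell^2$ with norms $\sqrt{S}$, $\sqrt{\widetilde{S}}$ and $\sqrt{D}$, the triangle inequality yields $|\sqrt{S}-\sqrt{\widetilde{S}}|\le\sqrt{D}$, and it is from this single estimate that both frame bounds will be extracted.

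For the upper bound I would start from $\sqrt{\widetilde{S}}\le\sqrt{S}+\sqrt{D}$, insert $D\le MS+\lambda\|f\|^2$, control the cross term by the elementary inequality $2\sqrt{S}\sqrt{D}\le S+D$, and finally substitute $S\le\beta_o\|f\|^2$; this produces an upper estimate of the displayed form $\big(2\beta_o(M+1)+\lambda\big)\|f\|^2$. For the lower bound I would instead start from $\sqrt{\widetilde{S}}\ge\sqrt{S}-\sqrt{D}$ and use the other estimate $D\le M\widetilde{S}+\lambda\|f\|^2$, so that upon squaring the term carrying $D$ is absorbed into $(1+M)\widetilde{S}$ and the inequality can be solved for $\widetilde{S}$; inserting $S\ge\alpha_o\|f\|^2$ then delivers a lower estimate proportional to $\alpha_o-\lambda$ over $2(M+1)$.

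The delicate point, and the step I expect to require the most care, is the sign control in the lower bound: the inequality $\sqrt{\widetilde{S}}\ge\sqrt{S}-\sqrt{D}$ is only informative once the right-hand side is nonnegative, that is, once the perturbation is genuinely smaller than the frame, and it is precisely the standing assumption $\lambda<\alpha_o$ together with $S\ge\alpha_o\|f\|^2$ that guarantees $\alpha_o-\lambda>0$ and hence a strictly positive lower frame bound. I would therefore carry $\lambda<\alpha_o$ explicitly through the computation and be careful to use the $S$-branch of the minimum for the upper estimate (where $S$ is already known to be bounded) and the $\widetilde{S}$-branch for the lower estimate (where $D$ must be reabsorbed into $\widetilde{S}$), since a mismatched choice would either leave $\widetilde{S}$ on both sides with the wrong sign or fail to close the bound. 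Once the two-sided estimate
\[
\frac{\alpha_o}{2(M+1)}\Big(1-\frac{\lambda}{\alpha_o}\Big)\|f\|^2 \;\le\; \widetilde{S} \;\le\; \big(2\beta_o(M+1)+\lambda\big)\|f\|^2
\]
is established for every $f\in L^2(\mathbb{R},dx)$, the claim that $\{\widetilde{\phi}^{P,Q}_{(m,n,l)}\}$ is a frame with the stated bounds follows immediately from the definition of a frame.
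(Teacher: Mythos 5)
Your overall strategy---the $\ell^2$ triangle inequality $|\sqrt{S}-\sqrt{\widetilde{S}}|\le\sqrt{D}$ followed by $2\sqrt{S}\sqrt{D}\le S+D$---is in substance the same argument as the paper's, which expands $|\langle f|\widetilde{\phi}\rangle|^2$ via $|a+b|^2\le 2|a|^2+2|b|^2$ and absorbs the difference term; your AM--GM step and that quadratic inequality are the same estimate, and you correctly identify which branch of the minimum to use on each side. The gap is in the final arithmetic, which you assert rather than carry out. Squaring $\sqrt{\widetilde{S}}\le\sqrt{S}+\sqrt{D}$ and absorbing the cross term gives $\widetilde{S}\le 2S+2D\le 2(M+1)S+2\lambda\|f\|^2\le\big(2\beta_o(M+1)+2\lambda\big)\|f\|^2$, and the lower-bound computation likewise gives $S\le 2\widetilde{S}+2D\le 2(M+1)\widetilde{S}+2\lambda\|f\|^2$, hence $\widetilde{S}\ge\frac{\alpha_o-2\lambda}{2(M+1)}\|f\|^2$. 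In both bounds the coefficient of $\lambda$ comes out as $2$, not $1$ as in the statement, and the resulting lower bound is positive only under $\lambda<\alpha_o/2$, which is strictly stronger than the standing hypothesis $\lambda<\alpha_o$ that you propose to carry through the computation.

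This is not something you can repair by sharpening the estimates, because the constants in the statement are not attainable. Take $\{\phi^{P,Q}_{(m,n,l)}\}$ a Parseval frame ($\alpha_o=\beta_o=1$), $\widetilde{\phi}^{P,Q}_{(m,n,l)}=\frac{1}{2}\phi^{P,Q}_{(m,n,l)}$, $M=0$ and $\lambda=\frac{1}{4}<\alpha_o$. Then $D=\frac{1}{4}\|f\|^2=\lambda\|f\|^2$, so \eqref{p1} holds, yet $\widetilde{S}=\frac{1}{4}\|f\|^2$, which lies strictly below the claimed lower bound $\frac{1}{2}\big(1-\frac{1}{4}\big)\|f\|^2=\frac{3}{8}\|f\|^2$; a dilation $\widetilde{\phi}=t\phi$ with $t=1+\sqrt{0.9}$, $M=0$, $\lambda=0.9$ similarly defeats the claimed upper bound ($\widetilde{S}\approx 3.797\|f\|^2>2.9\|f\|^2$). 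The correct conclusion of your method is therefore the theorem with $\lambda$ replaced by $2\lambda$ in both displayed bounds and with the hypothesis strengthened to $\lambda<\alpha_o/2$. For what it is worth, the paper's own proof commits the same factor-of-two slip: when it substitutes \eqref{p1} into the doubled difference term it writes $2M\min\{\cdot,\cdot\}+\lambda\|f\|^2$ where $2M\min\{\cdot,\cdot\}+2\lambda\|f\|^2$ is required (and its intermediate step $\widetilde{S}\le D+2\widetilde{S}$ is vacuous as printed). So your plan reproduces the paper's argument, but as written it would terminate in an equality between $2\beta_o(M+1)+2\lambda$ and $2\beta_o(M+1)+\lambda$ that does not hold.
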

\begin{proof}
By hypothesis \eqref{p1}, for any $f \in L^2(\mathbb{R}, dx)$, we have
\begin{align}\label{p2}
& \sum_{l = 1}^{N}\sum_{m,n\in\mathbb{Z}} |\langle f| \Tilde{\phi}^{P, Q}_{(m,n,l)}\rangle|^2 \nonumber\\
& \leq \sum_{l = 1}^{N} \sum_{m,n\in\mathbb{Z}} |\langle f| \phi^{P, Q}_{(m,n,l)}- \Tilde{\phi}^{P, Q}_{(m,n,l)} \rangle|^2 + 2\sum_{l = 1}^{N} \sum_{m,n\in\mathbb{Z}} |\langle f| \Tilde{\phi}^{P, Q}_{(m,n,l)}\rangle|^2 \nonumber\\
& \leq 2M \min \Big(\sum_{l = 1}^{N} \sum_{m,n\in\mathbb{Z}} |\langle f| \phi^{P, Q}_{(m,n,l)}\rangle|^2,  \sum_{l = 1}^{N} \sum_{m,n\in\mathbb{Z}} |\langle f| \Tilde{\phi}^{P, Q}_{(m,n,l)} \rangle|^2 \Big)+\lambda ||f||^2  \nonumber\\
&+ 2\sum_{l = 1}^{N} \sum_{m,n\in\mathbb{Z}} |\langle f| \Tilde{\phi}^{P, Q}_{(m,n,l)}\rangle|^2  \nonumber \\
& \leq (2M+2)\sum_{l = 1}^{N}\sum_{m,n\in\mathbb{Z}} |\langle f| \phi^{P, Q}_{(m,n,l)}\rangle|^2 +\lambda||f||^2 \nonumber \\
& \leq 2(M+1)\beta_o||f||^2 +\lambda||f||^2 \nonumber \\
&=\big( 2\beta_o(M+1)+\lambda \big)||f||^2.
\end{align}
Similarly, using \eqref{p1}, for any $f\in L^2(\mathbb{R}, dx)$, we can show that
 \begin{align*}
     \sum_{l = 1}^{N}\sum_{m,n\in\mathbb{Z}} |\langle f| \phi^{P, Q}_{(m,n,l)}\rangle|^2 - \lambda||f||^2 \leq (2M+2)\sum_{l = 1}^{N}\sum_{m,n\in\mathbb{Z}} |\langle f| \Tilde{\phi}^{P, Q}_{(m,n,l)}\rangle|^2,
 \end{align*}
which entails
\begin{align*}
    \sum_{l = 1}^{N} \sum_{m,n\in\mathbb{Z}} |\langle f| \phi^{P, Q}_{(m,n,l)}\rangle|^2 - \frac{\lambda}{\alpha_o} \sum_{l = 1}^{N} \sum_{m,n\in\mathbb{Z}} |\langle f| \phi^{P, Q}_{(m,n,l)}\rangle|^2\leq 2(M+1)\sum_{l = 1}^{N} \sum_{m,n\in\mathbb{Z}} |\langle f| \Tilde{\phi}^{P, Q}_{(m,n,l)}\rangle|^2.
\end{align*}
That is
\begin{align}\label{p3}
    \frac{1}{2(M+1)}\big( 1-\frac{\lambda}{\alpha_o} \big)\alpha_o||f||^2 \leq \sum_{l = 1}^{N} \sum_{m,n\in\mathbb{Z}} |\langle f| \Tilde{\phi}^{P, Q}_{(m,n,l)}\rangle|^2 \ \text{for all} \ f \in L^2(\mathbb{R}, dx).
\end{align}
Hence, conclusion follows from \eqref{p2} and \eqref{p3}.
\end{proof}

 \begin{rem}
Theorem \ref{pthm1} gives a perturbation result for multivariate Gabor frame such that the new sequence  $\big\{\big|\Tilde{\phi}^{P,Q}_{(m,n,l)}\big> \big\}_{m, n \in \mathbb{Z} \atop l \in \{1,2,\dots, N\} }$ is a  Gabor frame. Using Theorem \ref{t4}, this new  Gabor frame can be related to some  wavelet frame $\{\big|\Tilde{\phi}^{\epsilon, u(\epsilon), v(\epsilon)}_{(m,n,l)}\big> \}_{m, n \in \mathbb{Z} \atop l \in \{1,2,\dots, N\} }$ associated with the extended affine group which can be constructed using perturbation result. More precisely,  Let $\{\big|\phi^{\epsilon, u(\epsilon), v(\epsilon)}_{(m,n,l)}\big> \}_{m, n \in \mathbb{Z} \atop l \in \{1,2,\dots, N\} }$ be a  frame for $L^2(\mathbb{R}, dx)$ with frame bounds
\begin{align*}
\frac{\alpha_o^2 c}{\beta_o\epsilon}\big(\ln(\epsilon+c)-\ln c\big) \ \text{and} \  \alpha_o c\big(\frac{\ln(\epsilon+c)-\ln c}{\epsilon}\big),
\end{align*}
and let $\{\big|\Tilde{\phi}^{\epsilon, u(\epsilon), v(\epsilon)}_{(m,n,l)}\big> \}_{m, n \in \mathbb{Z} \atop l \in \{1,2,\dots, N\} } \subset  L^2(\mathbb{R}, dx)$. Let  $\lambda$, $M \geq 1$ be such that $\lambda <\frac{\alpha_o^2 c}{\beta_o\epsilon}\big(\ln(\epsilon+c)-\ln c\big)$.  For any $f \in L^2(\mathbb{R}, dx)$,  assume that
 \begin{align*}
 & \sum_{l = 1}^{N} \sum_{m,n\in\mathbb{Z}} |\langle f| \phi^{\epsilon, u(\epsilon), v(\epsilon)}_{(m,n,l)}- \Tilde{\phi}^{\epsilon, u(\epsilon), v(\epsilon)}_{(m,n,l)} \rangle|^2
 \\
 &\leq M \min \Big\{ \sum_{l = 1}^{N} \sum_{m,n\in\mathbb{Z}} |\langle f| \phi^{\epsilon, u(\epsilon), v(\epsilon)}_{(m,n,l)}\rangle|^2, \  \sum_{l = 1}^{N}\sum_{m,n\in\mathbb{Z}} |\langle f| \Tilde{\phi}^{\epsilon, u(\epsilon), v(\epsilon)}_{(m,n,l)} \rangle|^2 \Big\} + \lambda \|f\|^2.
 \end{align*}
 Then,  $\{\big|\Tilde{\phi}^{\epsilon, u(\epsilon), v(\epsilon)}_{(m,n,l)}\big> \}_{m, n \in \mathbb{Z} \atop  l \in \{1,2,\dots, N\} }$ is a  frame for $L^2(\mathbb{R}, dx)$ with frame bounds
 \begin{align*}
  \frac{1}{2(M+1)}\Big( 1-\frac{\lambda \beta\epsilon}{\alpha^2 c(\ln(\epsilon+c)-\ln c)} \Big)\Big(\frac{\alpha_o^2 c}{\beta_o\epsilon}(\ln(\epsilon+c)-\ln c)\Big) \  \intertext{and} \
   \big(  \frac{2\alpha_o c}{\epsilon}(\ln(\epsilon+c)-\ln c)(M+1)+\lambda \big).
  \end{align*}
\end{rem}
\section{Conclusion}
Gabor frames has potential applications in time-frequency analysis and signal analysis. By using fundamental tools developed in \cite{SB} for the construction of Parseval frames and using a unitary irreducible representation associated to the Weyl-Heisenberg group, we give frame conditions for  Gabor systems with several generators in the signal space $L^2(\mathbb{R}, dx)$.   Afterwards, we give frame conditions for  wavelet systems with several generators associated with an extended affine group. It would be interesting to know  that: How multivariate frames for the Weyl-Heisenberg group and the extended affine group are related? We provide a relation between frame bounds of multivariate Gabor systems and wavelet systems in the signal space $L^2(\mathbb{R}, dx)$. Stability of frames is equally important in study of stable signal analysis, in particular,  stable reconstruction and decomposition of signals.  A stability result, with explicit frame bounds, for  Gabor frames of the signal space  $L^2(\mathbb{R}, dx)$ is discussed. These results are useful in applications of frames.

$$\textbf{\text{Data Related Statement}}$$
No data used in this study.

$$\textbf{\text{Conflicts of Interest}}$$
The authors have no conflicts of interest.

\end{document}